\documentclass[11pt]{article}

\usepackage[T1]{fontenc}
\usepackage[utf8]{inputenc}
\usepackage{lmodern}
\usepackage{microtype}
\usepackage[a4paper,margin=1.1in]{geometry}
\usepackage{amsmath,amssymb,amsthm}
\usepackage{hyperref}

\hypersetup{
  colorlinks=true,
  linkcolor=blue,
  citecolor=blue,
  urlcolor=blue
}

\newtheorem{theorem}{Theorem}

\title{On the multiplicative group of a two-sided skew brace of solvable type}
\author{Marco Damele%
\thanks{Dipartimento di Matematica, Università di Cagliari, Via Ospedale 72, 09124 Cagliari, Italy; \texttt{marco.damele@unica.it}; ORCID 0009-0008-3088-5766}%
\thanks{The author was supported by INdAM and GNSAGA -- Gruppo Nazionale per le Strutture Algebriche, Geometriche e le loro Applicazioni, and by ProBiki, funded by Fondazione di Sardegna.}}
\date{}

\begin{document}

\maketitle

\begin{abstract}
We prove that if $(B,+,\cdot)$ is a two-sided skew brace whose additive group is solvable, then every finite quotient of the multiplicative group $(B,\cdot)$ is solvable. In particular, our result recovers Nasybullov's theorem in the finite case ~\cite[Theorem~4.3(1)]{Nas} and extends it to arbitrary two-sided skew braces of solvable type.
\end{abstract}

{\footnotesize\noindent\textbf{2020 Mathematics Subject Classification.} Primary 16T25; Secondary 20F16, 16N20.\par}

{\footnotesize\noindent\textbf{Keywords.} Skew brace, two-sided skew brace, solvable group, finite quotient, radical ring.\par}

\bigskip

Skew braces, introduced by Guarnieri and Vendramin~\cite{GV}, provide an algebraic framework for the study of set-theoretic solutions of the Yang--Baxter equation and are closely related to Hopf--Galois structures. We also refer to Damele and Loi~\cite{DL} for recent structural results in the Lie setting.
Recall that a \emph{skew brace} is a triple $(B,+,\cdot)$ such that $(B,+)$ and $(B,\cdot)$ are groups and
\[
a\cdot (b+c)=a\cdot b-a+a\cdot c
\]
for all $a,b,c\in B$, where $-a$ denotes the inverse of $a$ in the additive group $(B,+)$. A skew brace $(B,+,\cdot)$ is said to be \emph{two-sided} if it also satisfies
\[
(a+b)\cdot c=a\cdot c-c+b\cdot c
\]
for all $a,b,c\in B$. We say that $(B,+,\cdot)$ is \emph{of solvable type} if the additive group $(B,+)$ is solvable.
A basic open problem in the area, originating in Byott's work on Hopf--Galois structures~\cite{Byott} and explicitly recorded by Smoktunowicz and Vendramin~\cite[Question~1.25]{SV}, asks whether every finite skew brace of solvable type has solvable multiplicative group. This question remains open in general. Some positive results are known. If $(B,+)$ is nilpotent, then $(B,\cdot)$ is solvable by ~\cite[Corollary~1.23]{SV}. Gorshkov and Nasybullov proved the conjecture when $|B|$ is not divisible by $3$ and showed that a minimal counterexample cannot have simple multiplicative group~\cite{GN}. Stefanello and Trappeniers established the corresponding statement for bi-skew braces~\cite{ST}. For two-sided skew braces the situation is more rigid. Nasybullov proved that every finite two-sided skew brace of solvable type has solvable multiplicative group~\cite[Theorem~4.3(1)]{Nas}. On the other hand, he also constructed infinite two-sided skew braces with abelian additive group and non-solvable multiplicative group~\cite[Section~3]{Nas}, showing that the naive infinite analogue fails. Nevertheless, his examples suggest a weaker phenomenon: although the multiplicative group is not solvable, all of its finite quotients are solvable. The following theorem shows that this is indeed the case.

\begin{theorem}\label{thm:main}
Let $(B,+,\cdot)$ be a two-sided skew brace such that $(B,+)$ is solvable. Then, for every normal subgroup $N\trianglelefteq (B,\cdot)$ of finite index, the quotient $(B,\cdot)/N$ is solvable.
\end{theorem}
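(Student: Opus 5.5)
The plan is to use the two-sided hypothesis to split $(B,\cdot)$ into pieces that are adjoint groups of radical rings, and then to handle finite quotients of such adjoint groups, which is the hard part.

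\emph{Step 1: conjugation acts additively.} In a two-sided skew brace the multiplicative identity is $0$, and for fixed $c$ the map $x\mapsto c\cdot x\cdot c^{-1}$ is an automorphism of $(B,+)$. This follows from the two distributive laws. Left distributivity gives $c(a+b)=ca-c+cb$. Right-multiplying by $c^{-1}$ and using the right law, the terms $-c\cdot c^{-1}$ cancel, so $c(a+b)c^{-1}=cac^{-1}+cbc^{-1}$. The maps $\lambda_a(x)=-a+ax$ are also automorphisms of $(B,+)$. I would record both facts first.

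\emph{Step 2: a series of ideals.} Let $B=B^{(0)}\supseteq B^{(1)}\supseteq\dots\supseteq B^{(n)}=0$ be the derived series of $(B,+)$. Each $B^{(i)}$ is characteristic in $(B,+)$, so it is invariant under every $\lambda_a$ and under multiplicative conjugation. A $\lambda$-invariant normal subgroup of $(B,+)$ is a subgroup of $(B,\cdot)$, and conjugation invariance makes it normal there. Hence each $B^{(i)}$ is an ideal of the skew brace. Each factor $B^{(i)}/B^{(i+1)}$ is then a two-sided skew brace with abelian additive group, i.e.\ a two-sided brace. By Rump's correspondence such a brace is a Jacobson radical ring $R$, and its multiplicative group is the adjoint group $(R,\circ)$.

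\emph{Step 3: reduce to radical rings.} Given $N\trianglelefteq(B,\cdot)$ of finite index, the finite group $G=(B,\cdot)/N$ has the normal series with terms $B^{(i)}N/N$. Each factor $B^{(i)}N/B^{(i+1)}N$ is a finite quotient of $(B^{(i)}/B^{(i+1)},\cdot)$. Since solvability is closed under extensions, it suffices to prove: \emph{every finite quotient of the adjoint group of a radical ring is solvable.}

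\emph{Step 4: the main obstacle.} Let $R$ be a radical ring and $M\trianglelefteq(R,\circ)$ of finite index. My first attempt would be to find a two-sided ring ideal $I$ of $R$ such that $R/I$ is finite and $(R,\circ)/M$ is a quotient of $(R/I,\circ)$. A finite radical ring is nilpotent. Its adjoint group is then nilpotent, since the filtration by powers $(R/I)^k$ gives a central series, and that would finish the proof.

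The difficulty is that $M$ need not contain any ideal, or even any additive subgroup of finite index. To build $I$ I would try the following. Conjugation in $(R,\circ)$ is the restriction of conjugation by $1+c$ in the unitization $R^1$, so it is a ring automorphism of $R$. Hence the largest additive subgroup $A$ contained in $M$, in the sense of the set $\{x : \mathbb{Z}x\subseteq M\}$ closed up, is normalized by the adjoint group. I would then try to show that $\{x\in A : xR+Rx\subseteq A\}$ is an ideal of finite index.

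If this fails, the fallback is to argue directly on a minimal counterexample. Suppose $(R,\circ)/M$ is a finite nonsolvable group. Pass to a nonabelian chief factor, and take its preimage filtered by the images of the powers $R^k$. The factor $R^k/R^{k+1}$ carries the trivial product, so its adjoint group is abelian. The aim would be to show that a nonabelian simple section cannot survive this filtration together with $\bigcap_k R^k$, using Nasybullov's finite theorem~\cite[Theorem~4.3(1)]{Nas} for the finite pieces. I expect this step to be where the real work lies.
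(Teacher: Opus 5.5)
Your Steps 1--3 are correct and are essentially the paper's reduction. The paper inducts on the derived length of $(B,+)$, using Nasybullov's lemma that $D=(B,+)^{(1)}$ is an ideal. Your check that multiplicative conjugation and the maps $\lambda_a$ are additive automorphisms gives a self-contained proof of that lemma for every term of the derived series. So, as in the paper, everything hinges on the base case: every finite quotient of the adjoint group $(R,\circ)$ of a radical ring is solvable. Step 4 does not prove this, and neither of your routes can work as stated.

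\emph{The first route.} It would produce an ideal $I\subseteq M$ of finite index, so it would show that all finite quotients of $(R,\circ)$ are even \emph{nilpotent}. Nothing in your construction yields such an $I$. The set $\{x:\mathbb{Z}x\subseteq M\}$ need not be closed under addition, since $M$ is a subgroup for $\circ$, not for $+$, and its additive span need not lie in $M$. Even a genuine additive subgroup $A\subseteq M$ has no reason to have finite index, and then the ideal $\{x\in A: xR+Rx\subseteq A\}\subseteq A$ does not either.

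\emph{The fallback.} It fails because the filtration by the powers $R^k$ can be trivial. Radical rings may be idempotent: the maximal ideal $\mathfrak m$ of $\bigcup_{n\ge 1}\mathbb{Q}[[t^{1/n}]]$ is radical with $\mathfrak m=\mathfrak m^2$. Then $\bigcap_k R^k=R$ and the abelian layers say nothing. Moreover, Nasybullov's theorem concerns finite two-sided skew braces. Your finite pieces are only sections of the group $(R,\circ)/M$, not multiplicative groups of finite brace quotients of $R$, and producing those is exactly what the first route could not do.

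The paper does not argue inside the ring at this point. It imports the theorem of Amberg, Dickenschied and Sysak that the adjoint group of every radical ring has a series with abelian factors (is an SN-group), and then passes to finite quotients. Be aware that this last passage is itself delicate. For series of arbitrary order type, the SN property is not inherited by quotients: a free group is residually nilpotent, so its lower central series together with $1$ is a series with abelian factors, yet it maps onto $A_5$. By contrast, an \emph{ascending} well-ordered series with abelian factors does map to a finite such series in any finite quotient, because images commute with unions. So your proposal is missing a structural input on $(R,\circ)$ that survives passage to finite quotients; an ascending series with abelian factors would do. Filtrations of $R$ by ideals or powers cannot supply this, and the SN property alone, as cited in the paper, does not either.
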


\begin{proof}
Let \(n\) denote the derived length of the additive group \((B,+)\). We argue by induction on \(n\).
Assume first that $n=1$. Then $(B,+)$ is abelian, so $(B,+,\cdot)$ is a two-sided brace in the classical sense. By~\cite[Theorem~8.1.20]{CV}, there exists a Jacobson radical ring structure on the additive group $(B,+)$ whose adjoint group coincides with $(B,\cdot)$. By~\cite{ADS}, the adjoint group of a radical ring is an SN-group, that is, it admits a subnormal series with abelian factors. Quotients of SN-groups are again SN-groups, and every finite SN-group is solvable. Hence every finite quotient of $(B,\cdot)$ is solvable.
Assume now that $n>1$, and let
\[
D=(B,+)^{(1)}.
\]
Since $D$ is characteristic in $(B,+)$, Nasybullov's lemma on characteristic subgroups of the additive group of a two-sided skew brace implies that $D$ is an ideal of $B$~\cite[Lemma~4.1]{Nas}. In particular, $D$ is itself a two-sided skew brace, and $B/D$ is again a two-sided skew brace.
Let $N\trianglelefteq (B,\cdot)$ be of finite index and set
\[
F=(B,\cdot)/N.
\]
Let
\[
\pi\colon (B,\cdot)\twoheadrightarrow F
\]
be the natural epimorphism. Since $D$ is an ideal, $(D,\cdot)$ is a normal subgroup of $(B,\cdot)$, and therefore
\[
\pi(D)\cong (D,\cdot)/(D\cap N).
\]
Because $F$ is finite, the subgroup $\pi(D)$ is finite. Moreover, the additive group $(D,+)$ has derived length strictly smaller than $n$. By the induction hypothesis, every finite quotient of $(D,\cdot)$ is solvable. In particular, $\pi(D)$ is solvable.
Since $\pi(D)$ is a normal subgroup of $F$, the quotient $F/\pi(D)$ is defined. By the third isomorphism theorem,
\[
F/\pi(D)\cong (B,\cdot)/(DN).
\]
On the other hand, because $D$ is an ideal, we have a natural isomorphism
\[
(B/D,\cdot)\cong (B,\cdot)/D.
\]
Hence
\[
(B,\cdot)/(DN)
\cong \bigl((B,\cdot)/D\bigr)\big/\bigl((DN)/D\bigr)
\cong (B/D,\cdot)\big/\bigl((DN)/D\bigr).
\]
Thus $F/\pi(D)$ is a finite quotient of $(B/D,\cdot)$.
Now
\[
(B/D,+)\cong (B,+)/D
\]
is abelian, because $D=(B,+)^{(1)}$. Therefore the base case applies to the two-sided skew brace $B/D$, and every finite quotient of $(B/D,\cdot)$ is solvable. In particular, $F/\pi(D)$ is solvable.
We have shown that $\pi(D)$ is a solvable normal subgroup of $F$ and that the quotient $F/\pi(D)$ is solvable. Therefore $F$ is solvable. This completes the induction and the proof.
\end{proof}

\end{document}